\numberwithin{equation}{section}
\newtheorem{theorem}{Theorem}[section]
\theoremstyle{definition}
\newtheorem{corollary}[theorem]{Corollary}
\newtheorem{lemma}[theorem]{Lemma}
\newtheorem{definition}[theorem]{Definition}
\newtheorem{remark}[theorem]{Remark}
\newtheorem*{theorem*}{Theorem}
\newtheorem*{corollary*}{Corollary}
\newtheorem*{remark*}{Remark}
\newtheorem*{lemma*}{Lemma}
\title[Weyl's Law on Compact Heisenberg Manifolds]{A Tauberian Approach to an Analog of Weyl's law for the Kohn Laplacian on Compact Heisenberg Manifolds}
\author{Colin Fan} 
\address[Colin Fan]{Rutgers University--New Brunswick, Department of Mathematics, Piscataway, NJ 08854, USA}
\email{colin.fan@rutgers.edu}
\author{Elena Kim}
\address[Elena Kim]{Massachusetts Institute of Technology, Department of Mathematics, Cambridge, MA 02142, USA}
\email{elenakim@mit.edu}
\author{Yunus E. Zeytuncu}
\address[Yunus E. Zeytuncu]{University of Michigan--Dearborn, Department of Mathematics and Statistics, 
Dearborn, MI 48128, USA}
\email{zeytuncu@umich.edu}
\thanks{This work is supported by NSF (DMS-1950102 and DMS-1659203). The work of the last author is also partially supported by a grant from the Simons Foundation (\#353525).}
\subjclass[2010]{Primary 32W10; Secondary 32W30}
\keywords{Kohn Laplacian, Weyl's Law, Karamata's Tauberian Theorem, Compact Heisenberg Manifolds, Spectral Asymptotics}
\begin{document}
\begin{abstract}
Let $M= \Gamma \setminus \mathbb{H}_d$ be a compact quotient of the $d$-dimensional Heisenberg group $\mathbb{H}_d$ by a lattice subgroup $\Gamma$. We show that the eigenvalue counting function $N(\lambda)$ for any fixed element of a family of second order differential operators $\left\{\mathcal{L}_\alpha\right\}$ on $M$
has asymptotic behavior $N\left(\lambda\right) \sim C_{d,\alpha} \operatorname{vol}\left(M\right) \lambda^{d + 1}$, where $C_{d,\alpha}$ is a constant that only depends on the dimension $d$ and the parameter $\alpha$. As a consequence, we obtain an analog of Weyl's law (both on functions and forms) for the Kohn Laplacian on $M$. Our main tools are Folland's description of the spectrum of $\mathcal{L}_{\alpha}$ and Karamata's Tauberian theorem.
\end{abstract}

\maketitle

\section{Introduction}

\subsection{Motivation}
Motivated by the celebrated Weyl's law, we aim to study the asymptotic behavior of eigenvalues of the Kohn Laplacian $\square_b$ (also referred to as the complex Laplacian) on compact Heisenberg manifolds, specifically compact quotients of the Heisenberg group by lattice subgroups. Much of our work is inspired by \cite{REU2020Weyl}, where the authors compute the leading coefficient of the eigenvalue counting function for $\square_b$ on functions on the $(2n-1)$-dimensional sphere $S^{2n-1}$. As in the original Weyl's law, here the leading coefficient is proportional to the volume of $S^{2n-1}$, multiplied by a constant that depends only on the dimension $n$. This constant is expressed as an integral and is similar to the constant that appears in \cite[Theorem 6.1]{Stanton1984TheHE}. Note that the result in \cite{Stanton1984TheHE} examines Weyl's law for the Kohn Laplacian on $\left(p,q\right)$-forms, where $0 < q < n - 1$, on compact strongly pseudoconvex embedded CR manifolds of hypersurface type in $\mathbb{C}^n$ for $n\geq 3$. A similar analog of Weyl's law for the Kohn Laplacian on functions on such general CR manifolds is an open problem. \cite{REU2020Weyl} gives an answer on spheres.

In this paper, we obtain an analog of Weyl's law for the Kohn Laplacian on functions and on differential forms on compact Heisenberg manifolds.
We first note that the Heisenberg group has two distinguished left-invariant differential operators: $\mathcal{L}_0$ and $i^{-1} T$. For $\alpha \in \mathbb{R}$, the family of left-invariant operators given by $\mathcal{L}_\alpha = \mathcal{L}_0 +  i \alpha T$ is also of importance due to its relation to the Kohn Laplacian. In fact, the spectral analysis of $\square_b$ reduces to understanding $\mathcal{L}_\alpha$. 
We note that every positive real number is an eigenvalue of $\mathcal{L}_\alpha$ on the Heisenberg group, see \cite{STRICHARTZ1991350}, therefore the spectrum is not discrete. Thus, it is not a suitable manifold on which to count eigenvalues. However, on compact quotients $M$, the operators $\mathcal{L}_\alpha$ have discrete spectra, as noted in \cite{Folland2004CompactHM}. Thus, we can count the eigenvalues on these compact Heisenberg manifolds. We note that obtaining the asymptotics of a counting function for a given positive sequence of numbers is not always straightforward. 

In \cite{Strichartz2015} and \cite{Taylor1986}, the authors study the distribution of eigenvalues on compact quotients for the single operator $\mathcal{L}_0$. In particular, they obtain
the asymptotic result $N \left(\lambda\right)\sim C_{d,0} \operatorname{vol} \left(M\right) \lambda^{ d + 1}$, where $N(\lambda)$ is the eigenvalue counting function for $\mathcal{L}_0$ and $C_{d,0}$ is a constant that that depends only on the dimension $d$. In \cite{Strichartz2015}, Strichartz obtains his result by using Folland's explicit spectrum for $\mathcal{L}_0$ and a careful analysis of the asymptotics of binomial coefficients.
On the other hand, in \cite{Taylor1986}, Taylor uses asymptotics of the trace of the heat kernel and Karamata's Tauberian theorem, with no reference to an explicit spectrum. In this note, by combining both the explicit spectrum from \cite{Folland2004CompactHM} and Karamata's Tauberian theorem, we obtain asymptotics for a family of second order differential operators $\mathcal{L}_\alpha$, for $-d \leq \alpha \leq d$.


As a corollary, we obtain an analog of Weyl's law for the Kohn Laplacian on functions and differential forms on $M$. We note that our result on $\left(p,q\right)$-forms, up to a simple dimensional constant, matches the Weyl's law analog in \cite{Stanton1984TheHE}. Furthermore, the Weyl's law analog we obtain for functions matches, up to the same dimensional constant as before, with the Weyl's law for spheres in \cite{REU2020Weyl}. These observations provide more insight into the open problem mentioned above.


\subsection{Preliminaries}
We follow the exposition of the Heisenberg group and the Kohn Laplacian in \cite{Folland2004CompactHM} closely and refer the reader to that paper. We also refer the reader to \cite{CanarecciMasterthesis} and \cite[Chapter XIII]{Stein}  for further definitions and details, and \cite{CS01} for a detailed introduction to the Kohn Laplacian on CR manifolds.

\begin{definition} \label{def:Heisenberg}
The $d$-dimensional {\it Heisenberg group}, $\mathbb{H}_d$, is the set $\mathbb{C}^d \times \mathbb{R}$ along with the group law defined by
\[\left(z,t\right) \cdot \left(z',t'\right) = \left(z + z', t + t' + 2\operatorname{Im}\left\langle z, z'\right\rangle\right),\]
where $z,z' \in \mathbb{C}^d$; $t, t'\in \mathbb{R}$; and $\left\langle z, z'\right\rangle = z_1 \overline{z}'_1 + \cdots + z_d \overline{z}'_d$.
\end{definition}

Note that $\mathbb{H}_d$ embeds naturally in $\mathbb{C}^{d+1}$ under the identification 
\[\left(z,t\right) \mapsto \left(z,t + i\left|z\right|^2\right)\]
and therefore it is an embedded CR manifold of hypersurface type.

The Heisenberg group can be alternatively described in polarized coordinates. That is, $\mathbb{H}_d$ is the set $\mathbb{R}^d \times \mathbb{R}^d \times \mathbb{R}$ with the group law
\[\left(p,q,s\right) \cdot \left(p',q',s'\right) = \left(p + p', q + q', s + s' + p\cdot q'\right).\]

 For $-d \leq \alpha \leq d$, define the second order differential operator
\[\mathcal{L}_a = -\frac{ 1}{2} \sum_{j=1}^d \left( Z_j \overline{Z}_j +
    \overline{Z}_j Z_j\right) + i \alpha T,\]
where
\[\overline{Z}_j = \frac{\partial }{\partial \overline{z}_j} - i
      z_j \frac{\partial }{\partial t} \quad \text{ and } \quad T =
      \frac{\partial }{\partial t}.\]

The following properties of $\mathcal{L}_\alpha$ and $\square_b$ are well-known and documented in \cite{Folland2004CompactHM}. To investigate the spectral asymptotics of $\mathcal{L}_\alpha$, it is convenient to study $\mathcal{L}_0$ and $i^{-1}T$ separately as they are essentially self-adjoint strongly commuting operators. The connection between $\mathcal{L}_\alpha$ and $\square_b$ is then given by the diagonal action of $\square_b$ on $\left(0,q\right)$-forms, $0 \leq q \leq d$:
\[\square_b\left( \sum_{\left|J\right| = q} f_J d \overline{z}^J\right) = \sum_{\left|J\right| = q} \mathcal{L}_{d - 2q} f_J d \overline{z}^J,\]
where $f_J$ are functions, $J=\left(j_1, \ldots, j_q\right)$ with $1 \leq j_1  < \cdots <j_q \leq d$, and $d\overline{z}^J=d\overline{z}_1 \wedge \cdots \wedge d\overline{z}_q$.

Let $\Gamma$ be a lattice subgroup of $\mathbb{H}_d$, that is, a discrete subgroup so that $M = \Gamma\setminus \mathbb{H}_d$ is a compact manifold. Note that the CR structure and the operators $\mathcal{L}_\alpha$ and $T$ descend onto $M$. This makes $M$ a strongly pseudoconvex CR manifold. Furthermore, the diagonal action of $\square_b$ descends onto $M$.

Importantly, for all $\alpha$, $\mathcal{L}_\alpha$ on $M$ has discrete eigenvalues which are explicitly given in \cite{Folland2004CompactHM}. To obtain an analog of Weyl's law, we use a  generating functions argument and invoke
Karamata's Tauberian theorem (see \cite[Theorem 1.1, page 57]{ANPS09}).

\begin{theorem*}[Karamata]\label{thm:tauberian}
Let $\left\{\lambda_j\right\}_{j \in \mathbb{N}}$ be a sequence of positive real numbers such that $\sum_{j \in \mathbb{N}}e^{-\lambda_j t}$ converges for every $t>0$. Then for $n>0$ and $a \in \mathbb{R}$, the following are equivalent.
\begin{enumerate}
    \item $\lim_{t \to 0^+} t^n\sum_{j \in \mathbb{N}}e^{-\lambda_j t} =a$
    \item $\lim_{\lambda \to \infty} \frac{N\left(\lambda\right)}{\lambda^n}=\frac{a}{\Gamma\left(n+1\right)}$
\end{enumerate}
where $N\left(\lambda\right)=\#\left\{j:\lambda_j\leq \lambda\right\}$ is the counting function. 
\end{theorem*}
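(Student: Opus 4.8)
The plan is to recast both conditions in terms of the Laplace transform of the counting measure $d\mu := \sum_{j \in \mathbb{N}} \delta_{\lambda_j}$ on $(0,\infty)$. Writing $\theta(t) := \sum_{j \in \mathbb{N}} e^{-\lambda_j t} = \int_0^\infty e^{-xt}\, d\mu(x)$, which is finite for every $t>0$ by hypothesis, condition (1) says $t^n \theta(t) \to a$ as $t \to 0^+$, while condition (2) says $N(\lambda)/\lambda^n \to a/\Gamma(n+1)$ as $\lambda \to \infty$, where $N(\lambda) = \mu((0,\lambda])$. Convergence of $\sum_j e^{-\lambda_j t}$ forces $\lambda_j \to \infty$, so $N$ is finite everywhere and vanishes near $0$.

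The implication (2) $\Rightarrow$ (1), the Abelian direction, is the easy one. By Tonelli's theorem $\theta(t) = \sum_j \int_{\lambda_j}^\infty t e^{-xt}\,dx = t\int_0^\infty e^{-xt} N(x)\,dx$, so substituting $u = xt$ gives
\[
t^n \theta(t) = \int_0^\infty e^{-u}\, u^n \cdot \frac{N(u/t)}{(u/t)^n}\, du .
\]
The factor $N(u/t)/(u/t)^n$ tends pointwise to $a/\Gamma(n+1)$ by (2) and is bounded on $(0,\infty)$, so dominated convergence yields $t^n\theta(t) \to \frac{a}{\Gamma(n+1)}\int_0^\infty e^{-u} u^n\, du = a$.

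The substance of the theorem is the converse (1) $\Rightarrow$ (2), which I would establish by Karamata's substitution trick. For bounded $g$ on $(0,1]$ set $I_g(t) := t^n \int_0^\infty e^{-xt}\, g(e^{-xt})\, d\mu(x)$. When $g(y) = y^k$ one has $I_g(t) = t^n \theta\big((k+1)t\big) = (k+1)^{-n}\big((k+1)t\big)^n \theta\big((k+1)t\big)$, which by (1) converges to $a/(k+1)^n = \frac{a}{\Gamma(n+1)}\int_0^\infty e^{-u}\, g(e^{-u})\, n u^{n-1}\, du$. By linearity this limit identity holds for every polynomial $g$, and since $t^n\theta(t)$ is bounded near $0$, a Weierstrass-approximation argument on $[0,1]$ extends it to every continuous $g$. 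The decisive step is then to feed in the bounded, piecewise continuous function $g(y) = y^{-1}\mathbf{1}_{[e^{-1},1]}(y)$, for which $e^{-xt} g(e^{-xt}) = \mathbf{1}_{\{xt \le 1\}}$; then $I_g(t) = t^n N(1/t)$, while the limiting integral is $\frac{a}{\Gamma(n+1)}\int_0^1 n u^{n-1}\,du = \frac{a}{\Gamma(n+1)}$. Putting $\lambda = 1/t$ gives (2).

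I expect the main obstacle to be exactly this last extension: the function $g$ above is not a uniform limit of polynomials, so the continuous case does not apply directly. The standard fix is to sandwich $g$ between continuous functions $g_- \le g \le g_+$ that coincide with $g$ off a small neighborhood of the jump at $y = e^{-1}$ and satisfy $\frac{a}{\Gamma(n+1)}\int_0^\infty e^{-u}\,(g_+ - g_-)(e^{-u})\, n u^{n-1}\,du < \varepsilon$; from the pointwise bounds $e^{-xt} g_-(e^{-xt}) \le \mathbf{1}_{\{xt \le 1\}} \le e^{-xt} g_+(e^{-xt})$ and the continuous case applied to $g_\pm$, both $\limsup_{t \to 0^+} t^n N(1/t)$ and $\liminf_{t \to 0^+} t^n N(1/t)$ lie within $\varepsilon$ of $a/\Gamma(n+1)$, and letting $\varepsilon \to 0$ closes the argument. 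The only quantitative input this needs is boundedness of $t^n\theta(t)$ for small $t$, which is immediate from (1).
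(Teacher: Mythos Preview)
Your argument is the classical Karamata proof and is essentially correct: the Abelian direction via dominated convergence, and the Tauberian direction via the substitution $g\mapsto I_g(t)=t^n\int_0^\infty e^{-xt}g(e^{-xt})\,d\mu(x)$, verified first on monomials, extended to $C[0,1]$ by Weierstrass using only the boundedness of $t^n\theta(t)$, and then pushed to the discontinuous $g(y)=y^{-1}\mathbf{1}_{[e^{-1},1]}(y)$ by a continuous sandwich. The details you flag (finiteness of $N$, the sandwich at the jump) are handled correctly; note in passing that $a\ge 0$ is automatic since $t^n\theta(t)\ge 0$, so the ``$a\in\mathbb{R}$'' in the statement is harmless.

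There is nothing to compare against in the paper: the authors do not prove Karamata's theorem at all. They quote it verbatim from \cite[Theorem~1.1, p.~57]{ANPS09} and use only the implication $(1)\Rightarrow(2)$ as a black box in the proof of Theorem~\ref{thm:main}. So your write-up supplies a proof where the paper simply imports one; the approach you give is exactly the standard one found in the cited reference.
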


\subsection{Main Results}
Putting these ideas together yields the following analog of Weyl's law for $\mathcal{L}_\alpha$. 

\begin{theorem}\label{mainT}
 Let $N(\lambda)$ be the eigenvalue counting function for $\mathcal{L}_\alpha$ on $L^2\left(M\right)$ for $-d \leq \alpha \leq d$. 
   For $-d < \alpha < d$,
    \[\lim_{\lambda\to\infty} \frac{N \left(\lambda\right)}{\lambda^{d + 1}} =\operatorname{vol}\left(M\right) \frac{2}{\pi^{d + 1}\Gamma\left(d + 2\right)} \int_{-\infty}^\infty \left(\frac{x}{\sinh x}\right)^d e^{-\alpha x}\,dx\]
    and for $\alpha = \pm d$,
    \[\lim_{\lambda\to\infty} \frac{N \left(\lambda\right)}{\lambda^{d + 1}} =\operatorname{vol}\left(M\right) \frac{2d}{\left(d+1\right)\pi^{d + 1}\Gamma\left(d+2\right)} \int_{-\infty}^\infty \left(\frac{x}{\sinh x}\right)^{d + 1} e^{-\left(d - 1\right)x }\,dx.\]
\end{theorem}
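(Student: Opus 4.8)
The plan is to apply Karamata's Tauberian theorem with exponent $n = d+1$. By that theorem it suffices to analyze the heat trace
\[
\theta(t) \;=\; \sum_{\lambda_j > 0} e^{-\lambda_j t},
\]
the sum being over the positive eigenvalues of $\mathcal{L}_\alpha$ on $L^2(M)$ counted with multiplicity, and to show that $\lim_{t\to 0^+} t^{d+1}\theta(t)$ exists and equals the constant $a$ for which $a/\Gamma(d+2)$ is the asserted limit. The input is Folland's spectral decomposition of $L^2(M)$: it splits orthogonally into a \emph{toral part}, on which $T$ acts as $0$ and $\mathcal{L}_\alpha$ reduces to a positive constant multiple of the flat Laplacian on a $2d$-dimensional torus, and an \emph{oscillator part} assembled from the Schr\"odinger representations $\pi_\lambda$, with $\lambda$ ranging over the discrete subset of $\mathbb{R}\setminus\{0\}$ cut out by $\Gamma\cap Z(\mathbb{H}_d)$ (which we normalize to $\lambda = 2\pi k$, $k\in\mathbb{Z}\setminus\{0\}$), each occurring with multiplicity $c_d\operatorname{vol}(M)|k|^d$ for an explicit dimensional constant $c_d$; on the copy of $\pi_{2\pi k}$ the operator $\mathcal{L}_\alpha$ has eigenvalues $2\pi|k|(2|\mathbf{n}|+d) - 2\pi\alpha k$ indexed by $\mathbf{n}\in\mathbb{Z}_{\ge 0}^d$. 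For $-d\le\alpha\le d$ these are all nonnegative, and they vanish exactly when $\alpha = \pm d$, $\mathbf{n}=0$, and $k$ has the sign of $\alpha$; this is the infinite-dimensional kernel of $\mathcal{L}_{\pm d}$, which by definition is not counted by $N$.

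The toral part is harmless: the flat Laplacian on $T^{2d}$ has heat trace of order $t^{-d}$ as $t\to 0^+$, so its contribution to $t^{d+1}\theta(t)$ is $O(t)$. For the oscillator part with $-d<\alpha<d$, summing the geometric series over $\mathbf{n}$ gives $\sum_{\mathbf{n}}e^{-4\pi|k|t|\mathbf{n}|} = (1-e^{-4\pi|k|t})^{-d}$, and absorbing the factor $e^{-2\pi d|k|t}$ turns this into $(2\sinh(2\pi|k|t))^{-d}$; hence the oscillator contribution to $t^{d+1}\theta(t)$ equals
\[
\frac{c_d\operatorname{vol}(M)}{(2\pi)^d}\; t\sum_{k\neq 0}\left(\frac{2\pi|k|t}{\sinh(2\pi|k|t)}\right)^{\!d} e^{2\pi\alpha k t}.
\]
With $x = 2\pi k t$ the sum is a Riemann sum of mesh $2\pi t$ for $\int_{-\infty}^\infty (x/\sinh x)^d e^{\alpha x}\,dx$, an integral that converges precisely when $|\alpha|<d$ (the integrand decays exponentially at $\pm\infty$ if and only if this holds). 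Since $(x/\sinh x)^d e^{\alpha x}$ is continuous, positive, and exponentially small at infinity, a dominated-convergence argument lets one pass to the limit, giving $\tfrac{c_d\operatorname{vol}(M)}{(2\pi)^{d+1}}\int_{-\infty}^\infty (x/\sinh x)^d e^{-\alpha x}\,dx$ after the substitution $x\mapsto -x$; reading off $c_d$ from Folland's multiplicity formula and dividing by $\Gamma(d+2)$ yields the first formula.

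For $\alpha = \pm d$ we treat $\alpha = d$, the case $\alpha = -d$ being the same after $k\mapsto -k$. Deleting the zero eigenvalues, the representations with $k>0$ now contribute $(1-e^{-4\pi kt})^{-d} - 1$ and those with $k<0$ still contribute a full $(2\sinh(2\pi|k|t))^{-d}$-type sum; after simplification the two merge into
\[
\theta_{\mathrm{osc}}(t) \;=\; c_d\operatorname{vol}(M)\sum_{k\ge 1} k^d\left(\frac{2\cosh(2\pi d k t)}{(2\sinh(2\pi k t))^d} - 1\right),
\]
and $t^{d+1}\theta_{\mathrm{osc}}(t)$ is again a Riemann sum (the bracketed function is bounded near $0$ and exponentially small at $\infty$), converging to $\tfrac{c_d\operatorname{vol}(M)}{(2\pi)^{d+1}}\int_0^\infty x^d\big(\tfrac{2\cosh(dx)}{(2\sinh x)^d} - 1\big)\,dx$. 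Integrating by parts with $dv = x^d\,dx$ — the boundary terms vanish at both ends — and using
\[
\frac{d}{dx}\,\frac{\cosh(dx)}{\sinh^d x} \;=\; -\,\frac{d\,\cosh((d-1)x)}{\sinh^{d+1}x},
\]
which follows from $\cosh(dx)\cosh x - \sinh(dx)\sinh x = \cosh((d-1)x)$, converts this into a constant multiple of $\int_0^\infty x^{d+1}\cosh((d-1)x)/\sinh^{d+1}x\,dx = \tfrac12\int_{-\infty}^\infty (x/\sinh x)^{d+1}e^{-(d-1)x}\,dx$; tracking the constants and dividing by $\Gamma(d+2)$ gives the second formula.

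I expect the genuine work to lie in two places. First, the rigorous justification of the Riemann-sum limits — interchanging $\lim_{t\to 0^+}$ with the infinite sum over $k$ — which requires exhibiting a $t$-uniform dominating function; this is routine given the exponential decay of the summands but needs some care near $x=0$ and in the tails, and one must also verify the standing hypothesis of Karamata's theorem, that $\theta(t)<\infty$ for each $t>0$. Second, the bookkeeping of normalization constants: the powers of $2$ and $2\pi$, the exact set of central parameters cut out by $\Gamma$, and the precise multiplicity $c_d\operatorname{vol}(M)|k|^d$ of each $\pi_{2\pi k}$, so that the stated constants emerge exactly; the integration-by-parts identity producing the $(x/\sinh x)^{d+1}$ factor in the boundary case is the one computation that is not entirely mechanical.
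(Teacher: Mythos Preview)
Your proposal is correct and follows essentially the same route as the paper: apply Karamata to the heat trace built from Folland's explicit spectrum, discard the toral part as $O(t^{-d})$, and convert the oscillator sum over $k$ into an integral by a Riemann-sum/dominated-convergence argument (the paper phrases this step via a ``scaled ceiling function,'' Lemmas~\ref{lem:convertintegral} and~\ref{lem:exchange_lim_int}, which is exactly your Riemann sum in disguise). The one noteworthy difference is the boundary case $\alpha=\pm d$: the paper outsources the integral manipulation that produces the $(x/\sinh x)^{d+1}$ form to \cite{REU2020Weyl}, whereas your integration-by-parts derivation via $\tfrac{d}{dx}\bigl[\cosh(dx)/\sinh^d x\bigr]=-d\cosh((d-1)x)/\sinh^{d+1}x$ is self-contained and arguably cleaner.
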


From this statement and the diagonal action of $\square_b$ we can obtain the following statement on $\left(p,q\right)$-forms. Note that we require $d\geq 2$ to obtain nontrivial $\left(p,q\right)$-forms.

\begin{corollary}\label{mainC}
Fix $d \geq 2$. Let $N(\lambda)$ be the eigenvalue counting function for $\square_b$ on $M$ acting on $\left(p,q\right)$-forms, where $0 \leq p < d + 1$ and $0 < q < d$. We have that
\[\lim_{\lambda\to\infty} \frac{N \left(\lambda\right)}{\lambda^{d + 1}} = \operatorname{vol} \left(M\right) \binom{d}{p}\binom{d}{q} \frac{2}{\pi^{d + 1} \Gamma \left( d + 2\right)} \int_{-\infty}^\infty \left( \frac{x}{\sinh x}\right)^d e^{- \left(d - 2q\right)x}\,dx.\]
\end{corollary}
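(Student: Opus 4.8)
The plan is to derive Corollary~\ref{mainC} directly from Theorem~\ref{mainT} by unwinding the diagonal action of $\square_b$ on $(p,q)$-forms. Recall that $\square_b$ acts on a $(0,q)$-form $\sum_{|J|=q} f_J\, d\overline{z}^J$ component-wise as multiplication of the coefficient function $f_J$ by $\mathcal{L}_{d-2q}$. The first step is to note that $(p,q)$-forms on $M$ decompose as a direct sum over multi-indices: writing a $(p,q)$-form as $\sum_{|I|=p,\,|J|=q} f_{I,J}\, dz^I \wedge d\overline{z}^J$, the operator $\square_b$ acts diagonally in this basis, sending each $f_{I,J}$ to $\mathcal{L}_{d-2q} f_{I,J}$. (The parameter depends only on $q$, not $p$, because the holomorphic indices $dz^I$ are inert under the relevant $(0,1)$-derivatives.) Consequently the spectrum of $\square_b$ on $(p,q)$-forms is exactly the spectrum of $\mathcal{L}_{d-2q}$ on $L^2(M)$, but with every eigenvalue repeated $\binom{d}{p}\binom{d}{q}$ times, once for each choice of $(I,J)$ with $|I|=p$, $|J|=q$.

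The second step is the counting-function bookkeeping. If $N_{\mathcal{L}}(\lambda)$ denotes the eigenvalue counting function for $\mathcal{L}_{d-2q}$ on $L^2(M)$ and $N(\lambda)$ the counting function for $\square_b$ on $(p,q)$-forms, then the multiplicity observation gives $N(\lambda) = \binom{d}{p}\binom{d}{q}\, N_{\mathcal{L}}(\lambda)$ exactly, for every $\lambda$. Dividing by $\lambda^{d+1}$ and taking $\lambda\to\infty$, the constant $\binom{d}{p}\binom{d}{q}$ simply factors out of the limit.

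The third step is to plug in Theorem~\ref{mainT} with the substitution $\alpha = d - 2q$. Since the hypothesis $0 < q < d$ forces $-d < d-2q < d$, we are in the generic case of Theorem~\ref{mainT}, so
\[
\lim_{\lambda\to\infty} \frac{N_{\mathcal{L}}(\lambda)}{\lambda^{d+1}} = \operatorname{vol}(M)\, \frac{2}{\pi^{d+1}\Gamma(d+2)} \int_{-\infty}^\infty \left(\frac{x}{\sinh x}\right)^d e^{-(d-2q)x}\, dx,
\]
and multiplying through by $\binom{d}{p}\binom{d}{q}$ yields precisely the claimed formula. The requirement $d\geq 2$ is needed only to guarantee that the range $0<q<d$ is nonempty, i.e.\ that nontrivial $(p,q)$-forms with $0<q<d$ exist at all.

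The only genuinely substantive point — and the step I expect to require the most care — is justifying that $\square_b$ really is diagonal in the $dz^I \wedge d\overline{z}^J$ basis with multiplier depending only on $q$, and in particular that the decomposition of $L^2$ sections of the bundle of $(p,q)$-forms on $M$ into these components is orthogonal and $\square_b$-invariant after passing to the quotient. This is essentially a restatement of the diagonal action of $\square_b$ recorded in the preliminaries, extended from $(0,q)$-forms to $(p,q)$-forms by tensoring with the (flat, parallel) holomorphic form factor; once one checks that the left-invariant framing descends to $M$ and that the CR structure is preserved, everything else is routine linear algebra and an application of Karamata's theorem already packaged inside Theorem~\ref{mainT}. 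I would state this multiplicity fact as a short lemma, cite \cite{Folland2004CompactHM} for the form-level action, and then the corollary follows in a few lines.
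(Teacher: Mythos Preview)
Your proposal is correct and follows essentially the same approach as the paper: both arguments use the diagonal action of $\square_b$ to identify its spectrum on $(p,q)$-forms with $\binom{d}{p}\binom{d}{q}$ copies of the spectrum of $\mathcal{L}_{d-2q}$ on $L^2(M)$, then substitute $\alpha=d-2q$ (which lies in $(-d,d)$ precisely when $0<q<d$) into the main asymptotic. The only cosmetic difference is that the paper invokes the $N_a$ result (Theorem~\ref{thm:main}) and separately observes that the type~$(b)$ contribution remains $O(\lambda^d)$ on forms, whereas you invoke Theorem~\ref{mainT} directly on the full counting function; these are equivalent since Theorem~\ref{mainT} already absorbs the type~$(b)$ argument.
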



In the remainder of the paper, we prove the main theorem and its corollary.

\section{Proofs}

\subsection{Compact Quotients} For the proof of our theorem, we first establish some notation and properties of lattice subgroups of $\mathbb{H}_d$.

\begin{definition}
Let $\ell = \left(\ell_1,\ell_2,\ldots,\ell_d\right)$ be a $d$-tuple of positive integers so that $\ell_1 \mid \ell_2 \mid \cdots \mid \ell_d$. Define
\[\Gamma_\ell = \left\{\left(p,q,s\right): p,q\in \mathbb{Z}^d, s\in \mathbb{Z}, \ell_j \mid q_j\text{ for all } 1 \leq j \leq d\right\}.\]
$\Gamma_\ell$ is a lattice subgroup of the polarized Heisenberg group. Importantly, for a given $\Gamma_\ell$, define the constant $L = \ell_1 \ell_2 \cdots \ell_d$.
\end{definition}

\begin{theorem*}
\cite[Proposition 2.1]{Folland2004CompactHM} Given any lattice subgroup $\Gamma$ of $\mathbb{H}_d$, there exists a single $\ell$ and an automorphism $\Phi$ of $\mathbb{H}_d$ so that $\Phi \left(\Gamma\right) = \Gamma_\ell$. 
\end{theorem*}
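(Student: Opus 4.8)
The statement is a concrete instance of Mal'cev's rigidity for lattices in simply connected nilpotent Lie groups, and the plan is to prove it constructively: extract from $\Gamma$ a nondegenerate alternating integral form (its commutator pairing), put that form into Frobenius normal form, and then exhibit an explicit automorphism of $\mathbb{H}_d$ carrying a suitably normalized generating set of $\Gamma$ onto the standard generating set of $\Gamma_\ell$. To begin I would record the nilpotent dictionary. In polarized coordinates the center of $\mathbb{H}_d$ is $Z = \{(0,0,s) : s \in \mathbb{R}\} \cong \mathbb{R}$, which equals $[\mathbb{H}_d,\mathbb{H}_d]$; by standard results on lattices in simply connected nilpotent Lie groups, $\Gamma \cap Z$ is a lattice in $Z$, hence infinite cyclic---say $\Gamma \cap Z = \langle \zeta \rangle$ with $\zeta = (0,0,c_0)$, $c_0 > 0$---and the image $\overline{\Gamma}$ of $\Gamma$ in $\mathbb{H}_d/Z \cong \mathbb{R}^{2d}$ is a lattice, so $\overline{\Gamma} \cong \mathbb{Z}^{2d}$. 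A short computation with the group law shows that for $w, w' \in \mathbb{R}^{2d} = \mathbb{R}^d \times \mathbb{R}^d$ one has $[(w,s),(w',s')] = (0,0,\omega(w,w'))$, where $\omega((p,q),(p',q')) = p \cdot q' - p' \cdot q$ is the standard symplectic form; since commutators of elements of $\Gamma$ lie in $\Gamma \cap Z = c_0\mathbb{Z}$, the rule $B(\overline{\gamma},\overline{\gamma}') := c_0^{-1}\omega(w,w')$ (with $w,w'$ the $\mathbb{R}^{2d}$-components of $\gamma,\gamma'$) defines a $\mathbb{Z}$-valued alternating form on $\overline{\Gamma}$, nondegenerate because $\omega$ is and $\overline{\Gamma}$ spans $\mathbb{R}^{2d}$.

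Next I would apply the Frobenius (elementary-divisor) normal form for alternating forms over $\mathbb{Z}$: there is a $\mathbb{Z}$-basis $u_1,\dots,u_d,v_1,\dots,v_d$ of $\overline{\Gamma}$ and positive integers $\ell_1 \mid \ell_2 \mid \cdots \mid \ell_d$ (positive since $B$ is nondegenerate) with $B(u_i,v_j) = \ell_j\delta_{ij}$ and $B(u_i,u_j) = B(v_i,v_j) = 0$, and I would pick lifts $\gamma_i = (u_i, r_i)$ and $\delta_j = (v_j, t_j)$ in $\Gamma$ (reading $u_i, v_j$ as vectors in $\mathbb{R}^{2d}$). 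Since $\overline{\Gamma}$ is generated by $\{u_i,v_j\}$ and $\Gamma \cap Z$ by $\zeta$, the set $\{\gamma_i,\delta_j,\zeta\}$ generates $\Gamma$; likewise $\Gamma_\ell$ is generated by $\xi_i := (e_i,0,0)$, $\eta_j := (0,\ell_j e_j,0)$, and $\zeta_0 := (0,0,1)$, where $e_1,\dots,e_d$ is the standard basis of $\mathbb{Z}^d$, and the group law gives $[\xi_i,\eta_j] = \zeta_0^{\ell_j\delta_{ij}}$ with the $\xi$'s and the $\eta$'s respectively commuting. Hence it suffices to construct an automorphism $\Phi$ of $\mathbb{H}_d$ with $\Phi(\gamma_i) = \xi_i$, $\Phi(\delta_j) = \eta_j$, $\Phi(\zeta) = \zeta_0$: then $\Phi$ sends a generating set of $\Gamma$ to one of $\Gamma_\ell$, so $\Phi(\Gamma) = \Gamma_\ell$, and $\ell_1 \mid \cdots \mid \ell_d$ is built in.

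The construction of $\Phi$ is then explicit. As $\mathbb{H}_d$ is simply connected nilpotent, $\operatorname{Aut}(\mathbb{H}_d) \cong \operatorname{Aut}(\mathfrak{h}_d)$ for $\mathfrak{h}_d$ its Lie algebra, and differentiating at the identity shows that for any $A \in GL(2d,\mathbb{R})$ with $A^{*}\omega = c\,\omega$ (some $c \in \mathbb{R}^{\times}$) and any linear functional $L$ on $\mathbb{R}^{2d}$, the map $\Phi(p,q,s) = (A(p,q),\, cs + L(p,q))$ is an automorphism of $\mathbb{H}_d$. I take $c = c_0^{-1}$, so $\Phi(\zeta) = \zeta_0$; I let $A$ be the linear isomorphism with $Au_i = (e_i,0)$ and $Av_j = (0,\ell_j e_j)$, which is invertible (both $\{u_i,v_j\}$ and $\{(e_i,0),(0,\ell_j e_j)\}$ are $\mathbb{R}$-bases of $\mathbb{R}^{2d}$) and satisfies $\omega(Au_i,Av_j) = \ell_j\delta_{ij} = c_0^{-1}\omega(u_i,v_j)$ with all other basis pairings zero, hence $A^{*}\omega = c\,\omega$; and I let $L$ be the unique functional with $L(u_i) = -c\,r_i$, $L(v_j) = -c\,t_j$, which forces $\Phi(\gamma_i) = \xi_i$ and $\Phi(\delta_j) = \eta_j$. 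This $\Phi$ does the job.

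The main obstacle is not a single delicate estimate but keeping three ingredients in alignment: the nilpotent-lattice dictionary ($\Gamma \cap Z$ and $\overline{\Gamma}$ are lattices), the Frobenius normal form over $\mathbb{Z}$, and---the genuinely load-bearing point---the fact that $\operatorname{Aut}(\mathbb{H}_d)$ is large enough (acting conformally symplectically on $\mathbb{R}^{2d}$, with arbitrary central shear) to upgrade the abstract normalization of a generating set into an honest automorphism of $\mathbb{H}_d$; this last point is precisely Mal'cev rigidity in the present case. One should also note the uniqueness implicit in ``a single $\ell$'': any automorphism sending $\Gamma_\ell$ to $\Gamma_{\ell'}$ restricts to $\pm\operatorname{id}$ on the central $\mathbb{Z}$, hence identifies $B_\ell$ with $\pm B_{\ell'}$, and as these have equal invariant factors, $\ell = \ell'$.
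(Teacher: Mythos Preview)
The paper does not prove this statement; it simply quotes it from Folland's article, so there is no in-paper argument to compare against. Your outline is the standard proof and is essentially what Folland does: read off the commutator pairing as an integral alternating form on $\overline{\Gamma}\cong\mathbb{Z}^{2d}$, put it into Frobenius normal form to obtain the divisors $\ell_1\mid\cdots\mid\ell_d$, and then realize the normalization by an automorphism of $\mathbb{H}_d$. The structural ingredients (lattice intersects the center in a rank-one lattice, image in $\mathbb{R}^{2d}$ is a full lattice, uniqueness of $\ell$ via invariant factors) are all correctly identified.

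One technical slip to repair: in \emph{polarized} coordinates the group cocycle is $(p,q)\cdot(p',q')\mapsto p\cdot q'$, which is not antisymmetric, so the formula $\Phi(p,q,s)=(A(p,q),\,cs+L(p,q))$ is \emph{not} in general a homomorphism even when $A^{*}\omega=c\,\omega$. What is true is that $\beta(Aw,Aw')-c\,\beta(w,w')$ is then a \emph{symmetric} bilinear form, hence a coboundary $Q(w+w')-Q(w)-Q(w')$ for the quadratic $Q(w)=\tfrac12(\beta(Aw,Aw)-c\,\beta(w,w))$, and the correct lift is $\Phi(w,s)=(Aw,\,cs+L(w)-Q(w))$. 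Equivalently, your Lie-algebra description is right, but the exponential map in polarized coordinates introduces this quadratic correction. With $Q$ in place, your choice of $L$ simply absorbs the extra constants $Q(u_i)$, $Q(v_j)$, and the rest of your argument goes through unchanged.
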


Thus, we can associate every lattice subgroup with the constant $L$ given by $\Gamma_\ell$. 
Another useful property is that the center of a lattice subgroup is of the form $\left(0,0,c\mathbb{Z}\right)$ for some $c>0$. Knowing this information, the volume of $M$ can be computed as $L c^{d + 1}$, as shown in \cite{Strichartz2015}. We can now state Folland's result on the joint spectrum of $\mathcal{L}_0$ and $i^{-1} T$.

\begin{definition}
Let $A$ and $B$ be two operators on a vector space $V$. The {\it joint spectrum} of $A$ and $B$ is
\[\sigma \left(A,B\right) = \left\{\left(\lambda,\mu\right): v\in V\setminus\left\{0\right\}, Av = \lambda v, Bv = \mu v\right\},\]
counting multiplicities of $\left(\lambda,\mu\right)$.
\end{definition}

\begin{theorem*}
\cite[Theorem 3.2]{Folland2004CompactHM} Given a lattice with center $\left(0,0,c\mathbb{Z}\right)$, the joint
spectrum of $\mathcal{L}_0$ and $i ^{-1} T$ on $L^2 \left( M\right)$ is
\[\left\{ \left( \frac{ \pi \left| n \right|}{2c} \left( d + 2j
      \right), \frac{ \pi n }{2c} \right): j \in \mathbb{Z}_{\geq
      0}, n \in \mathbb{Z}\setminus \left\{ 0 \right\}\right\}
  \cup \left\{ \left( \frac{ \pi}{2} \left| \xi \right|^2,0
    \right): \xi \in \Lambda'\right\}\]
and the multiplicity of $ \left( \frac{ \pi \left| n \right|}{2c}
  \left( d + 2j \right), \frac{ \pi n }{2c} \right)$ is
\[\left| n\right|^d L \binom{j + d -
    1}{d - 1}.\]
    
$\Lambda'$ is the dual lattice of the lattice $\Lambda = \pi \left(\Gamma\right)$, where $\pi : \mathbb{H}_d \to \mathbb{C}^d$ is the quotient map $\pi\left(z,t\right) = z$. The multiplicity of an eigenvalue coming from the second set is dependent on the structure of $\Lambda'$. 
\end{theorem*}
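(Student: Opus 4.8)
The final statement is Folland's explicit description of the joint spectrum, so the plan is a representation-theoretic computation. The key observation is that $L^2(M)=L^2(\Gamma\setminus\mathbb{H}_d)$ is a unitary representation of $\mathbb{H}_d$ which, $\Gamma$ being a cocompact lattice in a nilpotent group, decomposes discretely into irreducibles with finite multiplicities; since $\mathcal{L}_0$ and $i^{-1}T$ come from the Lie algebra of $\mathbb{H}_d$, each acts by an explicit operator inside every irreducible summand. The whole theorem then reduces to (i) listing which irreducibles occur and with what multiplicity, and (ii) diagonalizing $\mathcal{L}_0$ inside each one.

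First I would decompose $L^2(M)$ by the action of the center $Z\cong\mathbb{R}$ (the one-parameter group generated by $T$). Since $\Gamma\cap Z=(0,0,c\mathbb{Z})$, the $\mathbb{R}$-action factors through a circle, giving $L^2(M)=\bigoplus_{n\in\mathbb{Z}}H_n$ with $i^{-1}T$ acting on $H_n$ by a scalar proportional to $n/c$ — this produces the value $\frac{\pi n}{2c}$ once the normalization of $T$ is fixed. For $n\neq 0$ the subspace $H_n$ has nontrivial central character, so by the Stone--von Neumann theorem it is $\pi_n\otimes\mathbb{C}^{m_n}$, where $\pi_n$ is the Schr\"odinger representation on $L^2(\mathbb{R}^d)$. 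In $\pi_n$ one checks directly that the symmetrized operator $-\frac12\sum_j(Z_j\overline{Z}_jZ_j^{0}+\overline{Z}_jZ_j)$, i.e. $-\frac12\sum_j(Z_j\overline{Z}_j+\overline{Z}_jZ_j)$, becomes a positive multiple of the isotropic harmonic oscillator $-\Delta+|x|^2$, whose spectrum is $\{\,\mathrm{const}\cdot|n|(d+2j):j\geq 0\,\}$ with the $j$-th eigenspace spanned by the Hermite functions of total degree $j$ and hence of dimension $\binom{j+d-1}{d-1}$. This yields the first family of joint eigenvalues, each with multiplicity $m_n\binom{j+d-1}{d-1}$. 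For $n=0$, $H_0$ is the space of functions pulled back from the torus $\Lambda\setminus\mathbb{C}^d$ with $\Lambda=\pi(\Gamma)$, on which $Z_j,\overline{Z}_j$ reduce to $\partial_{z_j},\partial_{\overline{z}_j}$; hence $\mathcal{L}_0$ is a constant multiple of the flat Laplacian, with eigenvalues $\frac{\pi}{2}|\xi|^2$ indexed by $\xi\in\Lambda'$ and with $i^{-1}T=0$, which is the second family.

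What remains is the multiplicity $m_n=|n|^d L$, and this is where I expect the real work to lie. After conjugating $\Gamma$ into the standard form $\Gamma_\ell$ and choosing a polarization of $\mathbb{H}_d$ adapted to it, the quotient lattice $\Gamma_\ell/(\Gamma_\ell\cap Z)$ acts projectively on $L^2(\mathbb{R}^d)$, and $H_n$ is precisely its space of invariants — a space of generalized theta functions ``at level $n$'' — whose dimension is a finite Fourier-analysis computation. It comes out to $|n|^d$, the square root of the lattice index $[\Lambda:|n|\Lambda]$, times $L=\ell_1\cdots\ell_d$, the index measuring how far $\pi(\Gamma_\ell)$ sits inside $\mathbb{Z}^{2d}$. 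Multiplying the three ingredients — the central-character scalar, the harmonic-oscillator eigenspace dimension $\binom{j+d-1}{d-1}$, and $m_n$ — gives the asserted eigenvalues and multiplicities. Besides the theta-function count, the chief hazard is bookkeeping: one must pin down a single normalization of the Schr\"odinger model and of the dual lattice $\Lambda'$ and carry the resulting factors of $\pi$, $2$, and $c$ consistently through all three steps.
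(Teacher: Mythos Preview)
The paper does not prove this statement at all: it is quoted verbatim as \cite[Theorem~3.2]{Folland2004CompactHM} and used as a black box, so there is no ``paper's own proof'' to compare against. Your sketch is therefore not a competing argument but an outline of what Folland himself does, and as such it is essentially correct: decompose $L^2(\Gamma\setminus\mathbb{H}_d)$ by central characters, invoke Stone--von~Neumann on each $H_n$ with $n\neq 0$ to land in the Schr\"odinger model where $\mathcal{L}_0$ becomes the isotropic harmonic oscillator, read off the Hermite spectrum and the dimensions $\binom{j+d-1}{d-1}$, and handle $H_0$ as the flat torus $\Lambda\setminus\mathbb{C}^d$. Your identification of the multiplicity $m_n=|n|^d L$ as the substantive step --- a theta-function count of $\Gamma$-invariants in the Schr\"odinger representation --- is also right, and your warning about tracking the constants $\pi$, $2$, $c$ through the chosen normalization is well placed. (There is a stray $Z_j^{0}$ in your displayed operator that you immediately correct; you should delete it.)
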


Since
$\mathcal{L}_0$ and $i ^{-1} T$ are self-adjoint strongly
commuting operators, as shown in \cite{STRICHARTZ1991350}, we have the following corollary of the theorem above.
\begin{corollary*}\cite[Corollary 3.3]{Folland2004CompactHM}
For $\alpha\in \mathbb{R}$, the spectrum of $\mathcal{L}_\alpha$ on $M$ is
\[ \underbrace{\left\{ \frac{ \pi \left| n \right|}{2c} \left( d + 2j - \alpha
    \operatorname{sgn} n
  \right): j \in \mathbb{Z}_{\geq 0}, n \in \mathbb{Z}\setminus
  \left\{ 0 \right\}\right\}}_{\text{type } \left(a\right)} \cup \underbrace{\left\{ \frac{ \pi}{2} \left|
    \xi \right|^2: \xi \in \Lambda'\right\}}_{\text{type } \left(b\right)}.\]
\end{corollary*}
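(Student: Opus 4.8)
The plan is to deduce the spectrum of $\mathcal{L}_\alpha$ directly from the joint spectrum of the pair $(\mathcal{L}_0, i^{-1}T)$ furnished by \cite[Theorem 3.2]{Folland2004CompactHM}, using the fact that $\mathcal{L}_0$ and $i^{-1}T$ are essentially self-adjoint and strongly commute \cite{STRICHARTZ1991350}. Strong commutativity means their spectral projections commute, so the spectral theorem for commuting self-adjoint operators provides a simultaneous diagonalization; because $M$ is compact the joint spectrum is discrete, so in fact one obtains an orthonormal basis of $L^2(M)$ consisting of joint eigenvectors. The first step is therefore to record that $L^2(M) = \bigoplus(\text{joint eigenspaces})$, indexed by the pairs $(\lambda,\mu)$ listed in Theorem 3.2 with their stated multiplicities.

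The second step is the key algebraic observation that expresses $\mathcal{L}_\alpha$ in terms of the commuting pair. Since $i^{-1} = -i$, we have $i\alpha T = -\alpha\, i^{-1}T$, hence
\[\mathcal{L}_\alpha = \mathcal{L}_0 + i\alpha T = \mathcal{L}_0 - \alpha\, i^{-1}T.\]
Consequently, on any joint eigenvector $v$ with $\mathcal{L}_0 v = \lambda v$ and $i^{-1}T\, v = \mu v$, the operator $\mathcal{L}_\alpha$ acts as the scalar $\lambda - \alpha\mu$. Since $\mathcal{L}_\alpha$ is thereby diagonalized in the same orthonormal basis, its spectrum is exactly $\{\lambda - \alpha\mu : (\lambda,\mu)\in\sigma(\mathcal{L}_0, i^{-1}T)\}$, counted with the same multiplicities, and no eigenvalues lie outside this set.

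It then remains to substitute the two families from Theorem 3.2 and simplify. For a type-$(a)$ pair $(\lambda,\mu) = \left(\frac{\pi|n|}{2c}(d+2j), \frac{\pi n}{2c}\right)$, writing $n = |n|\operatorname{sgn} n$ gives
\[\lambda - \alpha\mu = \frac{\pi}{2c}\bigl(|n|(d+2j) - \alpha n\bigr) = \frac{\pi|n|}{2c}\bigl(d + 2j - \alpha\operatorname{sgn} n\bigr),\]
which is exactly type $(a)$ of the corollary; for a type-$(b)$ pair $\left(\frac{\pi}{2}|\xi|^2, 0\right)$ we obtain $\lambda - \alpha\mu = \frac{\pi}{2}|\xi|^2$, which is type $(b)$. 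Taking the union completes the argument. I expect the only genuine subtlety to be the justification that the joint eigenvectors form a complete orthonormal basis and that the spectrum of $\mathcal{L}_\alpha$ contains nothing beyond the scalars $\lambda - \alpha\mu$; this rests entirely on strong commutativity (so that a joint spectral decomposition exists) together with discreteness of the spectrum on the compact quotient $M$, both of which are already established in \cite{STRICHARTZ1991350} and \cite{Folland2004CompactHM}. The remaining computations are purely bookkeeping.
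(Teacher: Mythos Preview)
Your proposal is correct and follows precisely the approach indicated in the paper: the corollary is stated as a direct consequence of the joint spectrum theorem together with the fact that $\mathcal{L}_0$ and $i^{-1}T$ are self-adjoint and strongly commute, and you have simply spelled out the straightforward deduction $\mathcal{L}_\alpha v = (\lambda - \alpha\mu)v$ on joint eigenvectors and checked the resulting formulas against the two families. There is nothing to add; the paper itself offers no further detail beyond the one-sentence justification you have expanded.
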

We label the
eigenvalues in the first set as type $\left(a\right)$, and the eigenvalues in
the second set as type $\left(b\right)$. Moreover, since the operators are self-adjoint and strongly commuting, the total multiplicity of a type $\left(a\right)$ eigenvalue $\lambda$ is the sum of the multiplicities of elements from the joint spectrum coming from distinct $n,j,\xi$ that add up to $\lambda$. For example, if
\[\lambda = \frac{\pi \left|n\right|}{2c} \left(d + 2j - \alpha \operatorname{sgn} n\right) = \frac{\pi \left|n'\right|}{2c} \left(d + 2j' - \alpha \operatorname{sgn} n'\right) = \frac{\pi}{2}\left|\xi\right|^2\]
for some $\xi$ and  $\left(n,j\right) \neq \left(n',j'\right)$,
then the multiplicity of $\lambda$ is exactly
\[\left|n\right|^d L \binom{j + d - 1}{d - 1} + \left|n'\right|{^d} L \binom{j' + d - 1}{d - 1} + \operatorname{mult} \left(\frac{\pi}{2} \left|\xi\right|^2\right).\]
The corollary above allows us to define the generating function $\sum_{\lambda} e^{-\lambda_j t}$ where the terms are repeated according to the multiplicity of $\lambda_j$. This function appears in section \ref{sect:proof_of_main}, where we invoke a Tauberian theorem to understand the distribution of eigenvalues.

We now decompose the eigenvalue counting function $N\left(\lambda\right)$ for $\mathcal{L}_\alpha$ into two parts. Let $N_a\left(\lambda\right)$ and $N_b\left(\lambda\right)$ be the positive
eigenvalue counting functions of type $\left(a\right)$ and $\left(b\right)$ respectively for
$\mathcal{L}_\alpha$. Formally, \[N_a\left(\lambda\right)=\#\left\{j: 0< \lambda_j\leq \lambda, \lambda_j \text{ is of type }\left(a\right)\right\} \text{ and }N_b\left(\lambda\right)=\#\left\{j: 0<\lambda_j\leq \lambda, \lambda_j \text{ is of type }\left(b\right)\right\}.\]
Therefore, to study $N(\lambda)$, it suffices to analyze $N_a(\lambda)$ and $N_b(\lambda)$ separately. 

\begin{remark}
Finally, before we provide the details of the proofs, we make a note on isospectral quotients. As noted in \cite{Folland2004CompactHM} the automorphisms of the Heisenberg group decompose  into three categories: symplectic automorphisms, inner automorphisms, and dilations. We suspect that few automorphisms of $\mathbb{H}_d$ yield isospectral quotient manifolds. That is, if $\varphi\in \operatorname{Aut}\left(\mathbb{H}_d\right)$, then $\Gamma\setminus \mathbb{H}_d$ and $\varphi\left(\Gamma\right)\setminus \mathbb{H}_d$ are unlikely to be isospectral. Our reasoning is based on the following observations. Indeed, dilations by $r$ change both type $\left(a\right)$ and $\left(b\right)$ eigenvalues by a factor of $r^2$. Similarly, we see that inner automorphisms by $\left(w,t\right)$, though they preserve the lattice structure, are unlikely to preserve the center for generic $w$. Thus, the symplectic matrices that preserve the lengths and multiplicities in the dual lattice are the only building block automorphisms of $\mathbb{H}_d$ that can reasonably yield isospectral manifolds. However, such a statement does not yield a rich class of examples. We leave a formal statement and investigation of isospectral Heisenberg manifolds to another study.
\end{remark}

\subsection{Sums to Integrals} In this part, we provide the analytical details of the proofs.
Taking a cue from \cite{REU2020Weyl}, we first define the scaled ceiling function.

\begin{definition}\label{def:scaledceil}
For $t> 0$, the {\it scaled ceiling function} $\left\lceil \cdot \right\rceil_t:\mathbb{R}\rightarrow \mathbb{R}$ is 
\[\left\lceil x \right\rceil_t =t \left\lceil x/t \right\rceil.\]
\end{definition}
Note that 
\[\left\lceil x \right\rceil_t=t \min\left\{n \in \mathbb{Z} : n \geq x/t\right\}=t \min\left\{n \in \mathbb{Z} : tn \geq x\right\}.\]
Therefore, $\left\lceil x \right\rceil_t$ can be thought of $x$ rounded up to the nearest integer multiple of $t$. This implies that for a fixed $x \in \mathbb{R}$ and $t>0$, we have $0 \leq \left\lceil x \right\rceil_t  -x < t $. As direct consequence, we have the following two properties. 

\begin{enumerate}
    \item \label{property1} For any fixed $x \in \mathbb{R}$, $\lim_{t \to 0^+}\left \lceil x \right\rceil_t =x$.
    \item \label{property2} Let $f:\left[a,b\right] \to \mathbb{R}$ be a monotonically decreasing function. Then for a fixed $0< t<b-a$, for all $x \in \left[a,b-t\right]$, we have $f\left(\left\lceil x \right\rceil_t\right) \leq f\left(x\right)$.
\end{enumerate}

The following lemma makes use of the definition of the scaled ceiling function to convert a right Riemann sum into an integral. It is used to simplify calculations in the proof of our main theorem. 

\begin{lemma}\label{lem:convertintegral}
For $u,v>0$,
\[t^{d+1} \sum_{n=1}^\infty n^d \frac{ e^{-t uv n }}{\left( 1 -e^{-2t u n} \right){^d}}=\int_{0}^\infty  \left\lceil x \right\rceil_t^d \frac{ e^{- u v\left\lceil x \right\rceil_t  }}{\left( 1 -e^{-2 u \left\lceil x \right\rceil_t} \right){^d}}\, dx.\]
\end{lemma}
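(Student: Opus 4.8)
The plan is to exploit the fact that the scaled ceiling function is a step function. For each integer $n \geq 1$ and every $x$ in the half-open interval $\bigl((n-1)t,\,nt\bigr]$ we have $x/t \in (n-1,n]$, hence $\lceil x/t\rceil = n$ and therefore $\lceil x \rceil_t = nt$. Consequently the integrand on the right-hand side,
\[
g_t(x) := \left\lceil x \right\rceil_t^{\,d}\,\frac{ e^{- u v\left\lceil x \right\rceil_t}}{\left( 1 -e^{-2 u \left\lceil x \right\rceil_t} \right)^{d}},
\]
is constant on each such interval, with value $(nt)^d\, e^{-uv(nt)} \bigl(1 - e^{-2u(nt)}\bigr)^{-d}$ there (note $1 - e^{-2u(nt)} > 0$ since $u,t > 0$ and $n \geq 1$, so $g_t$ is well defined there; the single point $x=0$ is irrelevant to the integral).

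First I would write $[0,\infty) = \bigcup_{n=1}^{\infty}\bigl((n-1)t,\,nt\bigr]$ as a disjoint union (up to the null set $\{0\}$) and integrate $g_t$ term by term over this partition. Since $g_t \geq 0$ on $(0,\infty)$, Tonelli's theorem (equivalently, monotone convergence applied to the partial sums of nonnegative terms) justifies interchanging the sum over $n$ with the integral, so that
\[
\int_0^\infty g_t(x)\,dx = \sum_{n=1}^\infty \int_{(n-1)t}^{nt} g_t(x)\,dx
= \sum_{n=1}^\infty t\cdot (nt)^d\,\frac{e^{-uv(nt)}}{\bigl(1-e^{-2u(nt)}\bigr)^{d}}.
\]
Pulling the factors of $t$ out of each summand gives $t^{d+1}\sum_{n=1}^\infty n^d\, e^{-tuvn}\bigl(1-e^{-2tun}\bigr)^{-d}$, which is exactly the left-hand side. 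Finiteness of both sides follows from the exponential decay $n^d e^{-tuvn} \to 0$ together with $\bigl(1-e^{-2tun}\bigr)^{-d} \to 1$ as $n \to \infty$, so the series converges for all $u,v,t > 0$.

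I do not expect a genuine obstacle here; the only points requiring a little care are (i) confirming the value of $\lceil x \rceil_t$ on the correct half-open intervals so that the partition is clean and no endpoint is double-counted, and (ii) invoking nonnegativity to legitimize the term-by-term integration rather than attempting a dominated-convergence argument. Both are routine, so the proof is essentially the computation displayed above.
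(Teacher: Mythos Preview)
Your proof is correct and follows essentially the same approach as the paper's: both recognize that the integrand is a step function constant on intervals determined by the scaled ceiling, integrate over each piece, and sum. The only cosmetic difference is that the paper first uses the ordinary ceiling $\lceil m\rceil$ on unit intervals and then substitutes $x=tm$, whereas you partition directly into the intervals $\bigl((n-1)t,nt\bigr]$; your added remarks on nonnegativity (Tonelli) and convergence are welcome but not required for the identity as stated.
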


\begin{proof}
We have that
\begin{align*}
t^{d+1} \sum_{n=1}^\infty n^d \frac{ e^{-t uv n  }}{\left( 1 -e^{-2t u n} \right){^d}} 
&= t^{d + 1} \sum_{n=1}^\infty \int_{n-1}^n  \left\lceil m \right\rceil^d \frac{ e^{-t u v\left\lceil m \right\rceil }}{\left( 1 -e^{-2t u \left\lceil m \right\rceil} \right){^d}}\, dm\\
&= \int_{0}^\infty t^{d+1} \left\lceil m \right\rceil^d \frac{ e^{-t u v\left\lceil m \right\rceil}}{\left( 1 -e^{-2t u \left\lceil m \right\rceil} \right){^d}}\, dm\\
&= \int_{0}^\infty \left(t \lceil x/t \rceil\right)^d  \frac{ e^{-t u v\left\lceil x/t \right\rceil  }}{\left( 1 -e^{-2t u \lceil x/t \rceil} \right){^d}}\, dx\\
&= \int_{0}^\infty  \left\lceil x \right\rceil_t^d \frac{ e^{- u v\left\lceil x \right\rceil_t}}{\left( 1 -e^{-2 u \left\lceil x \right\rceil_t} \right){^d}}\, dx,
\end{align*}
thus completing the proof.
\end{proof}

The next lemma demonstrates how the scaled ceiling function can be removed from the integrand through a limit. 

\begin{lemma} \label{lem:exchange_lim_int}
For $u, v > 0 $,
\begin{equation*}\label{eq:int1}
\lim_{t \to 0^+} \int_{0}^{\infty} \left\lceil x \right\rceil_{t}^d \frac{e^{-uv\left\lceil x \right\rceil_{t}}}{\left(1-e^{-2u \left\lceil x \right\rceil_{t}}\right){^d}}\,dx =\int_{0}^{\infty} x^d \frac{e^{-u v x}}{\left(1-e^{-2u x}\right){^d}}\, dx.
\end{equation*}
\end{lemma}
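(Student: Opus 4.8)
The plan is to justify exchanging the limit and the integral by dominated convergence. First I would fix $t$ small, say $0 < t < 1$, and split the integral at a fixed point, say $x = 1$. On the tail $[1,\infty)$ I expect a clean dominating function: since $\lceil x\rceil_t \geq x$ and, for $x \geq 1 > t$, also $\lceil x\rceil_t \leq x + t \leq 2x$, I can bound the integrand pointwise. The numerator $\lceil x\rceil_t^d e^{-uv\lceil x\rceil_t} \leq (2x)^d e^{-uvx}$, and the denominator $(1 - e^{-2u\lceil x\rceil_t})^d \geq (1 - e^{-2ux})^d \geq (1-e^{-2u})^d > 0$ is bounded away from zero on $[1,\infty)$; hence the integrand is dominated by $C x^d e^{-uvx}$ for a constant $C$ depending only on $u,v,d$, which is integrable on $[1,\infty)$.

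The delicate region is near $x = 0$, where the factor $(1 - e^{-2u\lceil x\rceil_t})^{-d}$ blows up like $(\lceil x\rceil_t)^{-d}$, and this cancels against the $\lceil x\rceil_t^d$ in the numerator. The cleanest way to handle $[0,1]$ is to observe that the full integrand equals $g(\lceil x\rceil_t)$, where $g(y) = y^d e^{-uvy}(1 - e^{-2uy})^{-d}$, and that $g$ extends to a continuous (indeed bounded) function on $(0,\infty)$ with a finite positive limit $g(0^+) = (2u)^{-d}$, since $1 - e^{-2uy} \sim 2uy$ as $y \to 0^+$. So $g$ is bounded on any interval $(0, b]$, say $|g(y)| \leq C_b$ for $y \in (0, 2]$; because $\lceil x\rceil_t \in (0, 2]$ whenever $x \in [0,1]$ and $t < 1$, the integrand on $[0,1]$ is uniformly bounded by the constant $C_b$, which is trivially integrable on $[0,1]$.

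With these two bounds in hand, the dominating function $h(x) = C_b \mathbf{1}_{[0,1]}(x) + Cx^d e^{-uvx}\mathbf{1}_{[1,\infty)}(x)$ is integrable on $[0,\infty)$ and independent of $t$ for $0 < t < 1$. Pointwise convergence of the integrand follows from Property \eqref{property1}: for each fixed $x$, $\lceil x\rceil_t \to x$ as $t \to 0^+$, and $g$ is continuous at $x$ (for $x > 0$), so $g(\lceil x\rceil_t) \to g(x)$; the single point $x = 0$ has measure zero. The dominated convergence theorem then yields the claimed identity. The main obstacle is simply making the near-zero estimate rigorous — confirming that $g$ is genuinely bounded near $0$ rather than merely finite pointwise — but this is immediate from the expansion $1 - e^{-2uy} = 2uy(1 + O(y))$, so no real difficulty remains once the interval is split correctly.
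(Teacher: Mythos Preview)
Your proof is correct and follows essentially the same route as the paper: both arguments split the domain, use the continuous extension of the integrand at $0$ to obtain a uniform bound on a compact initial interval, and control the tail by an integrable exponential so as to invoke dominated convergence. The only minor difference is that the paper appeals to property~(2) (eventual monotonicity of $f$) for the tail estimate, whereas you use the explicit two-sided bound $x \le \lceil x\rceil_t \le 2x$; your version is slightly more self-contained.
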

\begin{proof}
Define 
\[f\left(x\right) = x^d \frac{ e^{- u v x}}{\left( 1 - e^{-2u x}\right){^d}}.\]
Note that there exists an $M>0$ such that for all $x\geq M$, we have
\[\frac{1}{2} < \left(1 - e^{-2u x}\right){^d} \text{ and } x^d \leq e^{cx} \text{, where }  c = \frac{uv}{2}.\]
It follows that for all $x\geq M$,
\[f\left(x\right) \leq 2 e^{- cx}.\]

By compactness, continuity, and property (2) of the scaled ceiling function, it follows that for all $n\in \mathbb{N}$, $f(\left \lceil x\right\rceil_{1/n})$ is dominated by $R\chi_{\left[0,M\right]} + 2 e^{- cx}$ for some $R > 0$. Thus, we can apply the dominated convergence theorem and use property (1) of the scaled ceiling function to obtain the claim.
\end{proof}

When we invoke the above two lemmas in the following section, we only consider $v = d\pm \alpha$, where $- d < \alpha < d$.

\subsection{Proof of Theorem \ref{mainT}} \label{sect:proof_of_main}
In this section, we prove an asymptotic result for $N_a(\lambda)$, from which Theorem \ref{mainT} follows.

\begin{theorem}\label{thm:main}
  Fix $-d \leq \alpha \leq d$. We have that
  \[\lim_{\lambda\to\infty} \frac{N_a \left(\lambda\right)}{\lambda^{d + 1}} =C_{d,\alpha} \operatorname{vol} \left(M\right) . \] 
\end{theorem}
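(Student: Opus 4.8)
The plan is to apply Karamata's Tauberian theorem to the sequence of type $(a)$ eigenvalues, so the crux is to compute the limit of $t^{d+1}\sum_{\lambda \text{ type }(a)} e^{-\lambda t}$ as $t \to 0^+$, where each eigenvalue is weighted by its multiplicity. Using Folland's corollary, the type $(a)$ eigenvalues are $\frac{\pi|n|}{2c}(d+2j-\alpha\operatorname{sgn} n)$ for $n \in \mathbb{Z}\setminus\{0\}$, $j \in \mathbb{Z}_{\geq 0}$, with multiplicity $|n|^d L \binom{j+d-1}{d-1}$. So the generating function is
\[
Z_a(t) = \sum_{n=1}^\infty \sum_{\sigma = \pm 1} \sum_{j=0}^\infty |n|^d L \binom{j+d-1}{d-1} \exp\!\left(-\frac{\pi n t}{2c}(d+2j-\sigma\alpha)\right).
\]
First I would sum the geometric-type series over $j$: since $\sum_{j\geq 0}\binom{j+d-1}{d-1} x^j = (1-x)^{-d}$, the inner sum over $j$ collapses to $(1 - e^{-\pi n t/c})^{-d} e^{-\pi n t (d-\sigma\alpha)/(2c)}$. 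After separating the $\sigma = +1$ and $\sigma = -1$ contributions, $Z_a(t)$ becomes a sum of two series of exactly the shape $L\sum_{n=1}^\infty n^d e^{-tuvn}/(1-e^{-2tun})^d$ appearing in Lemma \ref{lem:convertintegral}, with $u = \pi/(2c)$ and $v = d \mp \alpha$ respectively (note $v > 0$ precisely because $-d < \alpha < d$; the endpoint case $\alpha = \pm d$ needs separate treatment, as one of the $v$'s vanishes and one must instead combine the $j$-sum with one factor from the denominator, yielding the $(x/\sinh x)^{d+1}$ integrand).

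Next, for $-d < \alpha < d$, I would invoke Lemma \ref{lem:convertintegral} to rewrite $t^{d+1}Z_a(t)$ as $L$ times a sum of two integrals $\int_0^\infty \lceil x\rceil_t^d \, e^{-uv\lceil x\rceil_t}(1-e^{-2u\lceil x\rceil_t})^{-d}\,dx$, then apply Lemma \ref{lem:exchange_lim_int} to pass to the limit $t \to 0^+$, obtaining
\[
\lim_{t\to 0^+} t^{d+1} Z_a(t) = L \sum_{\sigma=\pm 1}\int_0^\infty x^d \frac{e^{-u(d-\sigma\alpha)x}}{(1-e^{-2ux})^d}\,dx.
\]
Substituting $u = \pi/(2c)$, changing variables $y = ux$ to pull out $u^{-(d+1)} = (2c/\pi)^{d+1}$, and writing $(1-e^{-2y})^{-d} = e^{dy}(e^y - e^{-y})^{-d}/1 = (2\sinh y)^{-d} e^{dy} \cdot$ (care with the $e^{dy}$ factor combining with $e^{-d(\ldots)}$) lets me merge the two integrals over $(0,\infty)$ into a single integral of $(y/\sinh y)^d e^{-\alpha y}$ over $(-\infty,\infty)$ by sending $y \mapsto -y$ on one of them. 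Collecting constants and recalling $\operatorname{vol}(M) = Lc^{d+1}$, the limit equals $\operatorname{vol}(M)$ times an explicit constant $C_{d,\alpha}\Gamma(d+2)$ (the $\Gamma(d+2)$ being absorbed when we apply Karamata with $n = d+1$). Finally, Karamata's theorem converts this into $N_a(\lambda)/\lambda^{d+1} \to C_{d,\alpha}\operatorname{vol}(M)$, which is the claim; I would define $C_{d,\alpha}$ to be exactly the constant making the statement of Theorem \ref{mainT} come out right after one checks (elsewhere) that $N_b(\lambda) = o(\lambda^{d+1})$.

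The main obstacle I anticipate is purely bookkeeping rather than conceptual: correctly tracking the constant through the $j$-summation, the geometric identity, the change of variables $y = ux$, and the folding of two half-line integrals into one full-line integral, while simultaneously keeping the convergence hypotheses of the two lemmas ($u,v>0$) satisfied — which is exactly why the endpoints $\alpha = \pm d$ must be split off and handled by absorbing one power from the denominator into the $\binom{j+d-1}{d-1}$ sum before applying the lemmas. A secondary point to be careful about is justifying the interchange of the $n$-sum with everything else and the convergence of $Z_a(t)$ for each $t>0$, which follows from the exponential decay in $n$ but should be noted.
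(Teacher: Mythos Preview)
Your proposal is correct and follows essentially the same route as the paper: set up the heat trace $G(t)$ over type $(a)$ eigenvalues, collapse the $j$-sum via the negative binomial identity, apply Lemmas~\ref{lem:convertintegral} and~\ref{lem:exchange_lim_int} to pass to an integral, fold the two half-line integrals into one, and invoke Karamata; the endpoint $\alpha=\pm d$ is likewise split off in the paper, where the $j=0$, $n>0$ terms are dropped (they give the infinite-dimensional kernel) so that $G_+$ becomes $\sum_n n^d\bigl((1-e^{-tun})^{-d}-1\bigr)$, and the resulting integrals are then manipulated (citing \cite{REU2020Weyl}) into the $(x/\sinh x)^{d+1}$ form you anticipate.
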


\begin{proof}
By symmetry of $\operatorname{sgn} n$, it suffices to consider the case where $0 \leq \alpha \leq d$. We separate the cases for $0 \leq
\alpha < d$ and $\alpha = d$ and center our approach on Karamata's
Tauberian theorem. Let $u = \frac{\pi}{2c}$.

  First assume $0 \leq \alpha < d$. Setting $G(t)=\sum_{j \in \mathbb{N}} e^{-\lambda_j t}$, where $\lambda_j$ are the type $\left(a\right)$ eigenvalues of $\mathcal{L}_\alpha$ on $M$ included with multiplicity, we see that
  \begin{align*}
    G \left( t\right)
    &=
    \sum_{\substack{n \in \mathbb{Z}\setminus \left\{ 0\right\} \\
    j \in \mathbb{Z}_{\geq 0}}} \left| n\right|^d L \binom{j + d -
    1}{d - 1} e ^{- t u \left| n \right|\left( d + 2j - \alpha
    \operatorname{sgn} n \right)}\\
    &=L \sum_{\substack{n = 1\\ j = 0}}^\infty n^d \binom{j + d -
    1}{d - 
    1} e ^{- t u n \left( d + \alpha + 2j \right)} + L
    \sum_{\substack{n = 1\\ j = 0}}^\infty n^d \binom{j + d - 1}{d
    - 1} e 
    ^{- t u n \left( d - \alpha + 2j \right)}\\
    &= L \left(G_{-} \left( t\right) +  G_+ \left( t\right)\right),
  \end{align*}
  where $G_-,G_+$ are the parts of $G$, excluding multiplication by $L$, indexed by negative and positive $n$ respectively.
  Recall that
  \[\frac{ 1}{\left( 1 - z \right)^d} = \sum_{j=0}^\infty \binom{j
      + d - 1}{d - 1} z^j.\]
  We see that,
  \begin{align*}
    G_- \left( t\right)
    &= \sum_{n=1}^\infty n^d e^{-t u n \left( d + \alpha \right) }
      \sum_{j=0}^\infty \binom{j + d - 1}{d - 1} e^{-2t u n j} = \sum_{n=1}^\infty n^d
      \frac{ e^{-t u n \left( d + \alpha \right) }}{\left( 1 -
      e^{-2t u n} \right){^d}} \\
          \intertext{and}
    G_+ \left( t\right)
    &= \sum_{n=1}^\infty n^d e ^{-t u n \left( d - \alpha \right)}
      \sum_{j=0}^\infty \binom{j + d - 1}{d - 1}  e ^{-2 t u n j} =
      \sum _{n = 1} ^{\infty} n ^{d}
      \frac{ e^{-t u n \left( d - \alpha \right) }}{\left( 1 -
      e^{-2t u n} \right){^d}}.
  \end{align*}
  To analyze $t^{d+1} G \left( t\right)$, we convert the above sums
  into integrals. We have that
  
 \begin{align*}
 \lim _{t\to 0^+}t ^{d + 1} G \left( t\right) &=\lim_{t \rightarrow 0^+}L\left(\sum _{n = 1} ^{\infty} n ^{d} t^{d+1}\frac{ e^{-t u n \left( d + \alpha \right) }}{\left( 1 -  e^{-2t u n} \right){^d}}+\sum _{n = 1} ^{\infty} n ^{d} t^{d+1}\frac{ e^{-t u n \left( d - \alpha \right) }}{\left( 1 -  e^{-2t u n} \right){^d}} \right)\\
 &=L\lim_{t\rightarrow 0^+}  \left(\int_0^\infty \lceil x \rceil_t^d \frac{ e ^{- u \left( d +\alpha \right)\lceil x \rceil_t}}{\left( 1 - e ^{-2u\lceil x \rceil_t} \right){^d}}\,dx + \int_0^\infty \lceil x \rceil_t^d \frac{ e ^{- u \left( d -\alpha \right)\lceil x \rceil_t}}{\left( 1 - e ^{-2u\lceil x \rceil_t} \right){^d}}\,dx\right) & \text{(Lemma \ref{lem:convertintegral})}\\
 &= L \left(\int_0^\infty x^d \frac{ e ^{- u \left( d +\alpha \right)x}}{\left( 1 - e ^{-2ux} \right){^d}}\,dx + \int_0^\infty x^d \frac{ e ^{- u\left( d -\alpha \right)x}}{\left( 1 - e ^{-2ux} \right){^d}}\,dx\right) &\text{(Lemma \ref{lem:exchange_lim_int})} \\
 &= L \int_{-\infty}^\infty x^d \frac{ e ^{- u \left( d + \alpha \right)x}}{\left( 1 - e ^{-2ux}\right){^d}}\,dx\\
 &= L \int_{-\infty}^\infty x^d \frac{ e ^{- \frac{ \pi}{2c} \left( d + \alpha \right)x}}{\left( 1 - e ^{-\frac{ \pi}{c}x}\right){^d}}\,dx.
 \end{align*}
  Let $v = \frac{\pi x}{2c}$.  Recalling that $\operatorname{vol}(M)=Lc^{d+1}$, we have that
  \[\lim _{t\to 0^+} t^{d+1}G \left( t\right) = \operatorname{vol} 
      \left( M\right)\frac{2^{d + 1}
       }{\pi^{d+1}} \int_{-\infty}^\infty v^d\frac{
      e ^{-  \left( d + 
            \alpha \right)v}}{\left( 1 - e ^{-2v}
        \right){^d}}\,dv =\operatorname{vol} 
      \left( M\right) \frac{2}{\pi^{d + 1}} \int_{-\infty}^\infty \left(\frac{v}{\sinh v}\right)^d e^{- \alpha v} \,dv.\]
  Therefore,
  \[\lim_{\lambda\to\infty} \frac{N_a \left(\lambda\right)}{\lambda^{d + 1}} = \operatorname{vol}\left(M\right) \frac{2}{\pi^{d + 1} \Gamma \left( d + 2\right)}\int_{-\infty}^\infty \left(\frac{x}{\sinh x}\right)^d e^{- \alpha x} \,dx.\]

  Now fix $\alpha = d$. Note that when both $j=0$ and $n > 0$, we obtain
  eigenvalues equal to zero. This case is omitted as the kernel of $\mathcal{L}_d$ is infinite dimensional. Let $u = \pi/c$. We see that
  \begin{align*}
    G \left( t\right)
    &= L \sum _{\substack{n=1 \\ j = 0}}^\infty n^d
    \binom{j + d 
        - 1}{d - 1} e ^{- t u n \left( d + j \right)} + L \sum
        _{\substack{n = 1 \\ j = 1}}^\infty n^d \binom{j + d -
    1}{d - 1} e ^{- t u n j}\\
    &= L\left( G_- \left( t\right) +  G_+ \left( t\right)\right).
  \end{align*}
  Note that the $G_-$ and the $G_+$ that appear here are different from the previous case. By a similar analysis,
  \begin{align*}
    G_- \left( t\right)
    &= \sum _{n = 1} ^{\infty} n^d \frac{ e ^{-t u n d}}{\left( 1
      - e ^{-t u n} \right){^d}}\\
    G_+ \left( t\right)
    &= \sum _{n = 1} ^{\infty} n^d \left( \frac{ 1}{\left( 1 - e
      ^{ -t u n} \right){^d}} - 1\right). 
  \end{align*}
  We now convert to integrals. We refer to the analysis of $G_+$ in  \cite[Lemma 2.5 and Proposition 2.8]{REU2020Weyl} and to the analysis of $G_-$ in \cite[Lemma 2.11 and Proposition 2.13]{REU2020Weyl}. From their calculations, we obtain 
  \[\lim _{t\to 0^+} t^{d + 1} G \left( t\right) = L \int_0^\infty
    x^d \frac{ 1}{\left(e ^{\frac{\pi x}{c}} - 1\right){^d}}\,dx + L
    \int_0^\infty x^d \left( \frac{ 1}{\left( 1 - e ^{\frac{-\pi x}{c}}
        \right){^d}} - 1\right)\,dx.\]
  Let $v = \frac{\pi x}{2c}$.  We have that
  \[\lim _{t\to 0^+} t ^{d + 1} G \left( t\right) =\operatorname{vol} \left( M \right) d! \frac{ 2^{d
      + 1}}{\pi^{d + 1}}
  \frac{ 1}{d!} \int_0^\infty v^d \left( \frac{ 1}{\left( 1 -
        e^{-2 v} \right){^d}} - 1 + \frac{ 1}{\left( e^{2 v} - 1
      \right){^d}}\right)\,dv.\]
  The above integral is manipulated in \cite{REU2020Weyl} to obtain a form that is compatible with the results of \cite{Stanton1984TheHE}. Following their computation,
  \begin{align*}
    \lim _{t\to 0^+} t^{d + 1} G \left( t\right)
    &=   \operatorname{vol}
      \left( M\right) d! \frac{ 2^{d
      + 1}}{\pi^{d + 1}}
  \operatorname{vol} \left( S^{2d + 1}\right) \frac{ d}{\left(
      2\pi \right)^{d + 1} \left( d + 1 \right)}
  \int_{-\infty}^\infty \left( \frac{ x}{\operatorname{sinh}x
      } \right)^{d + 1} e^{-\left(d - 1\right)x }\,dx\\
    &=  \operatorname{vol}
      \left( M\right)\frac{ 2}{\pi^{d + 1}} \frac{ d}{d + 1} \int_{-\infty}^\infty \left( \frac{
      x}{\operatorname{sinh}x } \right)^{d + 1} e^{-\left(d - 1\right)x}\,dx .
  \end{align*}
  Therefore,
  \[\lim_{\lambda\to\infty} \frac{N_a \left(\lambda\right)}{\lambda^{d + 1}} = \operatorname{vol} \left(M\right) \frac{2d}{\left(d + 1\right)\pi^{d + 1}\Gamma \left(d  + 2\right)}\int_{-\infty}^\infty \left( \frac{
      x}{\operatorname{sinh}x } \right)^{d + 1} e^{-\left(d - 1\right)x}\,dx, \]
completing our proof.
\end{proof}

After understanding the asymptotics of $N_a(\lambda)$, we look at the distribution of type $(b)$ eigenvalues, which comes down to counting lattice points in $\mathbb{R}^{2d}$. Here, we use a more general theorem on the distribution of eigenvalues for the standard Laplacian on flat tori. Indeed, we invoke the following statement from \cite[page 26]{ANPS09}.

\begin{theorem*}[Weyl's Law for Flat Tori]
    Let $\Lambda$ be a full-rank lattice in $\mathbb{R}^n$, and $N\left(\lambda\right)$ be the eigenvalue counting function for the standard Laplacian on the flat torus, $T = \Lambda \setminus \mathbb{R}^n$. That is,
    \[N \left(\lambda\right) = \# \left\{\mu \in \Lambda' : \left| \mu\right| \leq \frac{\lambda^{1/2}}{2\pi}\right\}.\]
    Then,
    \[\lim_{\lambda\to\infty} \frac{N \left(\lambda\right)}{\lambda^{n/2}} =\frac{\operatorname{vol}\left(T\right)}{ \left(4\pi\right)^{n/2}\Gamma\left(\frac{n}{2} + 1\right)}. \]
\end{theorem*}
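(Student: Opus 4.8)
The plan is to reduce the statement to the classical fact that the number of points of a full-rank lattice inside a large Euclidean ball is asymptotic to the ball's volume divided by the covolume of the lattice, and then to bookkeep the constants via lattice duality. Write $R = R(\lambda) = \lambda^{1/2}/(2\pi)$, so that, by the given description, $N(\lambda) = \#\{\mu \in \Lambda' : |\mu| \le R\}$. Fix a bounded fundamental domain $D$ for $\Lambda'$, so $\operatorname{vol}(D) = \operatorname{covol}(\Lambda')$ and $\delta := \operatorname{diam}(D) < \infty$.

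First I would set up a sandwich estimate. The translates $\{\mu + D : \mu \in \Lambda',\ |\mu| \le R\}$ are pairwise disjoint up to measure zero; their union contains the ball $B_{R-\delta}$ about the origin and is contained in $B_{R+\delta}$. Comparing Lebesgue measures gives
\[\operatorname{vol}(B_{R-\delta}) \le N(\lambda)\,\operatorname{covol}(\Lambda') \le \operatorname{vol}(B_{R+\delta}).\]
Using $\operatorname{vol}(B_r) = \omega_n r^n$ with $\omega_n = \pi^{n/2}/\Gamma(n/2+1)$, dividing through by $\lambda^{n/2} = (2\pi R)^n$, and letting $\lambda \to \infty$ (so $R \to \infty$ while $\delta$ stays fixed, whence $(R\pm\delta)^n/R^n \to 1$), both outer quantities converge to $\omega_n/\bigl((2\pi)^n \operatorname{covol}(\Lambda')\bigr)$. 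Hence
\[\lim_{\lambda\to\infty}\frac{N(\lambda)}{\lambda^{n/2}} = \frac{\pi^{n/2}}{(2\pi)^n\,\Gamma(n/2+1)\,\operatorname{covol}(\Lambda')} = \frac{1}{(4\pi)^{n/2}\,\Gamma(n/2+1)\,\operatorname{covol}(\Lambda')}.\]

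Finally I would identify $\operatorname{covol}(\Lambda')$ with $1/\operatorname{vol}(T)$. Since $T = \Lambda \setminus \mathbb{R}^n$, we have $\operatorname{vol}(T) = \operatorname{covol}(\Lambda)$, and for a full-rank lattice and its dual $\operatorname{covol}(\Lambda)\operatorname{covol}(\Lambda') = 1$ (if $A$ is a basis matrix for $\Lambda$, then $(A^{-1})^{\mathsf T}$ is a basis matrix for $\Lambda'$). Substituting $\operatorname{covol}(\Lambda') = 1/\operatorname{vol}(T)$ into the displayed limit yields exactly $\operatorname{vol}(T)\big/\bigl((4\pi)^{n/2}\Gamma(n/2+1)\bigr)$, as claimed.

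There is no deep obstacle here; the only point requiring care is the error control in the sandwich — one must check that the cells straddling the sphere $|\mu| = R$ contribute only a lower-order term, which is automatic since that boundary layer lies in the spherical shell $R - \delta \le |x| \le R + \delta$ of fixed thickness, hence has volume $O(R^{n-1}) = o(\lambda^{n/2})$. As an alternative consistent with the paper's Tauberian theme, one could instead note that the Laplacian on $T$ has eigenvalues $4\pi^2|\mu|^2$, $\mu \in \Lambda'$, so the heat trace $\sum_{\mu \in \Lambda'} e^{-4\pi^2 |\mu|^2 t}$ equals, by Poisson summation, $\operatorname{vol}(T)(4\pi t)^{-n/2}\sum_{\gamma \in \Lambda} e^{-|\gamma|^2/4t}$, which is asymptotic to $\operatorname{vol}(T)(4\pi t)^{-n/2}$ as $t \to 0^+$; Karamata's theorem with exponent $n/2$ and $a = \operatorname{vol}(T)/(4\pi)^{n/2}$ then gives the same conclusion.
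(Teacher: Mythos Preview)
Your argument is correct. The sandwich estimate via a bounded fundamental domain for $\Lambda'$ is the standard Gauss lattice-point counting, and your bookkeeping of constants (in particular $\omega_n/(2\pi)^n = 1/(4\pi)^{n/2}\Gamma(n/2+1)$ and $\operatorname{covol}(\Lambda') = 1/\operatorname{vol}(T)$) is accurate.

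However, there is nothing to compare against: the paper does not prove this theorem at all. It is quoted verbatim as a known result from \cite[page~26]{ANPS09} and used only as a black box to conclude $N_b(\lambda) \in O(\lambda^d)$, i.e.\ that the type~$(b)$ eigenvalues do not affect the leading asymptotics of $N(\lambda)$. So your write-up supplies a proof the paper deliberately omits. Of the two routes you sketch, the Tauberian one (Poisson summation on the heat trace followed by Karamata) is closer in spirit to the paper's overall methodology and to how the cited reference \cite{ANPS09} itself treats the flat torus, while the direct lattice-point count is more elementary and self-contained. Either is fine here; for the paper's purposes only the order-of-growth conclusion $N_b(\lambda) = O(\lambda^d)$ is actually needed.
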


As the above theorem implies that $N_b(\lambda) \in O(\lambda^d)$, we conclude that $N_b(\lambda)$ does not contribute to the leading coefficient asymptotics. 
Thus, we conclude that Theorem \ref{mainT} follows from Theorem \ref{thm:main} and Weyl's law for flat tori.


\subsection{Proof of Corollary \ref{mainC}}

The computations in Theorem \ref{thm:main} can be also used to obtain an analog of Weyl's law on $\left(p,q\right)$-forms since the action of $\square_b$ on $\left(0,q\right)$-forms is expressed diagonally by $\mathcal{L}_{d - 2q}$.The only technicality that remains is the multiplicity. Note that any computation for multiplicity for $\left(0,q\right)$-forms extends directly to $\left(p,q\right)$-forms by multiplication by $\binom{d}{p}$. 
If $\omega$ is a $q$-form, then it can be written as 
\[\sum_{|J|=q}\omega_J d\overline{z}^J,\]
where $\omega_J$ are functions, $J=\left(j_1, \ldots, j_q\right)$ with $1 \leq j_1  < \cdots <j_q \leq d$, and $d\overline{z}^J=d\overline{z}_1 \wedge \cdots \wedge d\overline{z}_q$. 

Noting that the $d\overline{z}^J$ are linearly independent, we have $\square_b \omega =\lambda \omega$ if and only if $\square_b \omega_J =\lambda \omega_J$ for each $J$.
From the convention $1 \leq j_1  < \cdots <j_q \leq d$, there are $\binom{d}{q}$ possibilities for $J$. Since $\square_b f=\lambda f$ implies  $\square_b f d\overline{z}^J=\lambda f d\overline{z}^J$, each eigenfunction of $\square_b$ induces $\binom{d}{q}$ many eigenforms. 

Therefore, to study $(0,q)$-forms, we set  $\alpha = d - 2q$ in the $- d < \alpha < d$ case of Theorem \ref{thm:main} and multiply the result by $\binom{d}{q}$. Then for $N_a(\lambda)$,

 \[\lim_{\lambda \rightarrow \infty} \frac{N_a(\lambda)}{\lambda^{d+1}}=\operatorname{vol} \left(M\right)\binom{d}{q} \frac{2}{\pi^{d + 1}\Gamma(d+2)} \int_{-\infty}^\infty \left(\frac{x}{\sinh x}\right)^d e^{- \left(d-2q\right) x} \,dx.\]
 
 Corollary \ref{mainC}
 follows immediately by multiplication by $\binom{d}{p}$ and noting that $N_b(\lambda)$ for $\square_b$ on $(p,q)$-forms is still in $O \left(\lambda^{d}\right)$.


This result is strikingly similar to the Weyl's law analog obtained by Stanton and Tartakoff in \cite{Stanton1984TheHE}. Note however, that their theorem requires that the manifold be an embedded hypersurface (that is co-dimension one). Though the Heisenberg group is such a manifold, we lose this property when passing to the quotient. The quotient is not a co-dimension one manifold. 
This difference is reflected in the difference by a factor of $2^{-d-2}$ in the leading coefficients. Furthermore, the result in \cite{Stanton1984TheHE} does not apply to functions, whereas our result in this note covers functions and differential forms of all degrees. 



\section*{Acknowledgements} 
First, we thank the other members of Team Hermann: Zoe Plzak, Ian Shors, and Samuel Sottile for their support during this work.
We also thank Kamryn Spinelli for his helpful comments on an earlier version of this paper. This research was completed at the REU Site: Mathematical Analysis and Applications at the University of Michigan-Dearborn. We would like to thank the National Science Foundation (DMS-1950102), the National Security Agency (H98230-21), the College of Arts, Sciences, and Letters, and the Department of Mathematics and Statistics for their support.


\newcommand{\etalchar}[1]{$^{#1}$}

\end{document}